\theoremstyle{plain}
\newtheorem{theorem}{Theorem}
\newtheorem{lemma}[theorem]{Lemma}
\theoremstyle{definition}
\newtheorem{definition}[theorem]{Definition}
\theoremstyle{remark}
\newtheorem{remark}[theorem]{Remark}
\def\d#1{{#1\kern-0.4em\char"16\kern-0.1em}}
\def\D#1{{\raise0.2ex\hbox{-}\kern-0.4em #1}}
\newcounter{zd}
\newcounter{zdr}[subsection]
\newcommand{\eps}{\varepsilon}
\def\Dscon{\relbar\joinrel\dscon}
\def\povrhsk#1{\smash{
        \mathop{\;\Dscon\;}\limits^{#1}}}
\def\R{I\!\!R}
\def\N{I\!\!N}
\def\F{{\cal F}}
\def\pa{\partial}
\def\cal{\mathcal}
\def\dscon{\relbar\joinrel\rightharpoonup}
\def\Dscon{\relbar\joinrel\dscon}
\def\povrhsk#1{\smash{
        \mathop{\;\Dscon\;}\limits^{#1}}}
\let\ph=\varphi
\begin{document}

\title[$H$-measures and fractional conservation laws]
{A generalization of $H$-measures and application on purely
fractional scalar conservation laws}

\author{ D.~ Mitrovic}\thanks{Permanent address od D.M. is University of Montenegro, Faculty of Mathematics, Cetinjski put bb, 81000 Podgorica, Montenegro}
\address{ Darko Mitrovic, University of Bergen, Johannes Bruns gate 12, 5020 Bergen}
 \email{  matematika@t-com.me}
\author{I.~Ivec}
 \address{ Ivan Ivec, University of Zagreb, Faculty of Mathematics, Bijenicka cesta 30, 10000 Zagreb, Croatia}
 \email{ivan.ivec@gmail.com}

\maketitle

\begin{abstract}
We extend the notion of $H$-measures on test functions defined on
$\R^d\times P$, where $P\subset \R^d$ is an arbitrary compact simply
connected Lipschitz manifold such that there exists a family of
regular nonintersecting curves issuing from the manifold and
fibrating $\R^d$. We introduce a concept of quasi-solutions to
purely fractional scalar conservation laws and apply our extension
of the $H$-measures to prove strong $L^1_{loc}$ precompactness of
such quasi-solutions.
\end{abstract}

\section{Introduction}

Suppose that we wish to solve a nonlinear PDE which we write
symbolically as $A[u]=f$, where $A$ denotes a given nonlinear
operator. One of usual ways to do it is to approximate the PDE by a
collection of "nicer" problems $A_k[u_k]=f_k$, where $(A_k)$ is a
sequence of operators which is somehow close to $A$. Then, we try to
prove that the sequence $(u_k)$ converges toward a solution to the
original problem $A[u]=f$. The overall impediment is of course
nonlinearity which prevents us from obtaining necessary uniform
estimates on the sequence $(u_k)$. The typical situation is the
following.

Let $\Omega$ be an open set in $\R^d$, and let $(u_k)$ be a bounded
sequence in $L^2(\Omega)$ converging in the sense of distributions
to $u\in L^2(\Omega)$. In order to prove that $u$ is a solution to
$A[u]=f$, we need to prove that $(u_k)$ converges strongly to $u$,
say, in $L^1_{loc}(\Omega)$ (often situation in conservation laws;
see e.g. \cite{MA1, Dpe, pan_arma}). One of the ways is to consider
the sequence $\nu_k=|u_k-u|^2$ bounded in the space of Radon
measures ${\cal M}(\R^d)$. Since it is bounded, there exists a
measure $\nu$ such that $\nu_k\rightharpoonup \nu$ along a
subsequence in ${\cal M}(\R^d)$. The support of $\nu$ is the set of
points in $\Omega$ near which $(u_k)$ does not converge to $u$ for
the strong topology of $L^2(\R^d)$. The measure $\nu$ is called a
defect measure and it was systematically studied by P.L.Lions. For
instance, if we are able to prove that $\nu$ is equal to zero out of
a negligible set, then $(u_k)$ will $L^2$-strongly converge toward
$u$ on a set large enough to state that $u$ is a solution to
$A[u]=f$. Such method is called the concentrated compactness method
\cite{Lio1, Lio2}.

A shortcoming of the latter defect measure is that they are not
sensitive to oscillation corresponding to different frequencies. For
instance, consider the sequence $(u_k(x))_{k\in
\N}=(exp(ikx\xi))_{k\in \N}$, where $i$ is the imaginary unit,
$\xi\in \R^d$ is a fixed vector, and $x\in \R^d$ is a variable. The
sequence is bounded which implies that it is bounded in
$L^2(\Omega)$ for any bounded $\Omega\subset \R^d$. Furthermore, it
is well known that $u_k\rightharpoonup 0$ in the sense of
distributions but $(u_k)$ does not converge strongly in $L^p_{loc}$
for any $p>0$. On the other hand, the defect measure $\nu$
corresponding to the sequence $(u_k)$ is the Lebesgue measure for
any $\xi\in \R^d$ (and $\xi$ determines the frequency of the rapidly
oscillating sequence $(u_k)$).

Step forward in this direction was made at the beginning of 90's
when L.Tartar \cite{Tar} and P.Gerard \cite{Ger} independently
introduced the $H$-measures (microlocal defect measures). They are
given by the following theorem:

\begin{theorem}\cite{Tar}
\label{tbasic1} If $(u_n)=((u_n^1,\dots, u_n^r))$ is a sequence in
$L^2(\R^d;\R^r)$ such that $u_n\rightharpoonup 0$ in
$L^2(\R^d;\R^r)$, then there exists its subsequence $(u_{n'})$ and a
positive definite matrix of complex Radon measures
$\mu=\{\mu^{ij}\}_{i,j=1,\dots,r}$ on $\R^d\times S^{d-1}$ such that
for all $\varphi_1,\varphi_2\in C_0(\R^d)$ and $\psi\in C(S^{d-1})$:
\begin{equation}
\label{basic1}
\begin{split}
\lim\limits_{n'\to \infty}\int_{\R^d}&(\varphi_1
u^i_{n'})(x)\overline{{\cal A}_\psi(\varphi_2
u^j_{n'})(x)}dx=\langle\mu^{ij},\varphi_1\overline{\varphi_2}\psi
\rangle\\&= \int_{\R^d\times
S^{d-1}}\varphi_1(x)\overline{\varphi_2(x)}\psi(\xi)d\mu^{ij}(x,\xi),
\ \ i,j=1,\dots,r,
\end{split}
\end{equation}where ${\cal A}_\psi$ is a multiplier operator with the symbol $\psi\in C(S^{d-1})$.
\end{theorem}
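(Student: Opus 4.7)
The strategy is to realize the limiting object as a uniformly bounded sesquilinear form, extract a convergent subsequence by diagonalization, and identify the limit with a matrix of Radon measures via the Riesz representation theorem. To begin, extend each $\psi\in C(S^{d-1})$ to a $0$-homogeneous function on $\R^d\setminus\{0\}$, so that $\mathcal{A}_\psi f=\mathcal{F}^{-1}(\psi\hat f)$ defines a bounded operator on $L^2(\R^d)$ with operator norm $\|\psi\|_{L^\infty(S^{d-1})}$ (Plancherel). For each $n$, set
\begin{equation*}
T_n^{ij}(\varphi_1,\varphi_2,\psi)=\int_{\R^d}(\varphi_1 u_n^i)(x)\,\overline{\mathcal{A}_\psi(\varphi_2 u_n^j)(x)}\,dx.
\end{equation*}
Cauchy--Schwarz and the uniform $L^2$-boundedness of $(u_n)$ (a consequence of weak convergence) yield $|T_n^{ij}|\le C\|\varphi_1\|_{L^\infty}\|\varphi_2\|_{L^\infty}\|\psi\|_{L^\infty}$ with $C$ independent of $n$. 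Picking countable dense subsets of $C_0(\R^d)$ and $C(S^{d-1})$ and performing a diagonal extraction produces a subsequence $(u_{n'})$ along which $T_{n'}^{ij}$ converges for all triples to a functional $T^{ij}$, and the uniform bound propagates convergence to all $(\varphi_1,\varphi_2,\psi)$.

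The pivotal step is to identify $T^{ij}$ with a measure on the product space $\R^d\times S^{d-1}$ rather than $\R^d\times\R^d\times S^{d-1}$; this requires showing that $T^{ij}$ depends on $(\varphi_1,\varphi_2)$ only through the product $\varphi_1\overline{\varphi_2}$. The standard device is the \emph{First Commutation Lemma}: for $b\in C_0(\R^d)$ and $\psi\in C(S^{d-1})$, the commutator $[\mathcal{A}_\psi, M_b]$ is compact on $L^2(\R^d)$, where $M_b$ denotes multiplication by $b$. Granting this, one writes $\mathcal{A}_\psi(\varphi_2 u_{n'}^j)=\varphi_2\,\mathcal{A}_\psi u_{n'}^j + [\mathcal{A}_\psi,M_{\varphi_2}]u_{n'}^j$, and since $u_{n'}^j\rightharpoonup 0$ the compact image tends to zero strongly in $L^2$. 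Consequently
\begin{equation*}
T^{ij}(\varphi_1,\varphi_2,\psi)=\lim_{n'\to\infty}\int_{\R^d}\varphi_1\overline{\varphi_2}\,u_{n'}^i\,\overline{\mathcal{A}_\psi u_{n'}^j}\,dx,
\end{equation*}
depending on the first two arguments only via $\varphi_1\overline{\varphi_2}$. By Stone--Weierstrass the algebra of tensor products $\phi(x)\psi(\xi)$ is dense in $C_0(\R^d\times S^{d-1})$, so the uniformly bounded functional $(\phi,\psi)\mapsto T^{ij}(\phi,1,\psi)$ is represented by a complex Radon measure $\mu^{ij}$.

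Positive definiteness follows by testing with $\varphi_1=\varphi_2=\varphi$ and $\psi\ge 0$, and using the factorization $\mathcal{A}_\psi=\mathcal{A}_{\sqrt\psi}\circ\mathcal{A}_{\sqrt\psi}$ together with the self-adjointness of $\mathcal{A}_{\sqrt\psi}$ (valid since $\sqrt\psi$ is real). For any $\lambda_1,\dots,\lambda_r\in\C$, set $w_n=\varphi\sum_i\lambda_i u_n^i$; then
\begin{equation*}
\sum_{i,j}\lambda_i\overline{\lambda_j}\,T_n^{ij}(\varphi,\varphi,\psi)=\langle w_n,\mathcal{A}_\psi w_n\rangle_{L^2}=\|\mathcal{A}_{\sqrt\psi}w_n\|_{L^2}^2\ge 0,
\end{equation*}
and passing to the limit yields $\sum_{i,j}\lambda_i\overline{\lambda_j}\langle\mu^{ij},|\varphi|^2\psi\rangle\ge 0$. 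Since such products are dense among nonnegative elements of $C_0(\R^d\times S^{d-1})$, the scalar measure $\sum_{i,j}\lambda_i\overline{\lambda_j}\mu^{ij}$ is nonnegative, which is the desired positive-definiteness of the matrix measure.

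The principal technical obstacle is the First Commutation Lemma: neither $M_b$ nor $\mathcal{A}_\psi$ is individually compact, so compactness of the commutator must be extracted from their joint regularity. The cleanest route is to establish it first for $b\in C_c^\infty(\R^d)$ and $\psi\in C^\infty(S^{d-1})$ by writing the commutator as an integral operator whose Schwartz kernel is smooth off the diagonal with controlled decay, yielding compactness (e.g.\ via an approximation by Hilbert--Schmidt operators). One then extends to merely continuous $b$ and $\psi$ by approximation in operator norm, exploiting the uniform estimate $\|[\mathcal{A}_\psi,M_b]\|_{L^2\to L^2}\le 2\|\psi\|_{L^\infty}\|b\|_{L^\infty}$ and the fact that compact operators form a norm-closed ideal.
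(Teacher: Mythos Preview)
The paper does not actually prove this statement: Theorem~\ref{tbasic1} is quoted from \cite{Tar} without proof. The only place the argument is revisited is in the proof of the generalized Theorem~\ref{tbasic1_novo}, where the authors give a two-line sketch: the First Commutation Lemma makes the limiting functional depend on $(\varphi_1\overline{\varphi_2},\psi)$ as a bilinear form on $C_0(\R^d)\times C(P)$; the Schwartz kernel theorem then extends it to a distribution on the product, and Schwartz's lemma on nonnegative distributions upgrades it to a Radon measure.

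Your proof is correct and is in fact a more detailed rendering of Tartar's original argument than what the paper provides. The only substantive difference from the paper's sketch is the identification step: you use Stone--Weierstrass density of tensor products together with the uniform bound and Riesz representation, whereas the paper invokes the Schwartz kernel theorem followed by the positivity-implies-measure lemma. Both routes rest on the First Commutation Lemma to collapse the dependence on $(\varphi_1,\varphi_2)$ to the product $\varphi_1\overline{\varphi_2}$, and both arrive at the same measure; your packaging is slightly more elementary (avoiding distribution theory), while the paper's is terser. Your explicit diagonal extraction and your treatment of positive-definiteness via $\mathcal{A}_\psi=\mathcal{A}_{\sqrt\psi}^2$ are standard and correct, and are simply omitted from the paper's sketch.
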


The complex matrix Radon measure $\{\mu^{ij}\}_{i,j=1,\dots, r}$
defined in the previous theorem we call the {\em $H$-measure}
corresponding to the subsequence $(u_{n'})\in L^2(\R^d;\R^r)$.

The $H$-measures describe a loss of strong $L^2$ compactness for the
corresponding sequence $(u_n)\in L^2(\R^d;\R^r)$. In order to
clarify the latter, assume that we are dealing with one dimensional
sequence $(u_n)$ (this means that $r=1$). Then, notice that, by
applying the Plancherel theorem, the term under the limit sign in
Theorem \ref{tbasic1} takes the form
\begin{equation}
\label{ns30} \int_{\R^d} \widehat{\ph_1 u_{n'}}
\overline{\psi\widehat{\ph_2 u_{n'}}}\,d\xi\;,
\end{equation}where by $\hat u(\xi) = ({\cal F}u)(\xi) = \int_{\R^d} e^{-2\pi
ix\cdot\xi} u(x)\,dx$ we denote the Fourier transform on $\R^d$
(with the inverse $(\bar{\cal F}v)(x) := \int_{\R^d} e^{2\pi
ix\cdot\xi} v(\xi)\,d\xi$). Now, it is not difficult to see that if
$(u_n)$ is strongly convergent in $L^2$, then the corresponding
H-measure is trivial. Conversely, if the H-measure is trivial, then
$u_n\longrightarrow 0$ in $L^2_{loc}(\R^d)$ (see \cite{Ant}).

One of constraints in using the $H$-measures concept is that the
symbols of the defining  multipliers appearing in \eqref{basic1} are
defined on the unit sphere. This makes the concepts adapted for
usage basically only on hyperbolic problems (see e.g. \cite{MA1,
Ger, pan_jhde} and exceptions \cite{Sazh, JMSpa}). The reason for
the mentioned confinement lies in the lemma which provides linearity
of the integral on the right-hand side of \eqref{basic1}. This is so
called first commutation lemma and is stated as follows:

\begin{lemma}\cite[Lemma 1.7]{Tar}
\label{scl} (First commutation lemma)  Let $a\in C(S^{d-1})$ and
$b\in C_0(\R^d)$. Let ${\cal A}$ be a multiplier operator with the
symbol $a$, and $B$ be an operator of multiplication given by the
formulae:
\begin{align*}
&\F({\cal A}u)(\xi)=a\big(\frac{\xi}{|\xi|}\big)\F(u)(\xi) \ \ a.e. \ \ \xi\in \R^d,\\
 &Bu(x)=b(x)u(x) \ \ a.e. \ \ x\in \R^d,
\end{align*} where ${\cal F}$ is the Fourier transform.
Then $C={\cal A}B-B{\cal A}$ is a compact operator from $L^2(\R^d)$
into $L^2(\R^d)$.
\end{lemma}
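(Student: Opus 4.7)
The plan is to establish compactness of $C = \mathcal{A}B - B\mathcal{A}$ by first reducing to smooth symbols via a density argument, and then decomposing the commutator into one piece handled by Rellich--Kondrachov and another by a Hilbert--Schmidt bound. The bilinear map $(a,b) \mapsto \mathcal{A}B - B\mathcal{A}$ from $C(S^{d-1}) \times C_0(\R^d)$ into the bounded operators on $L^2(\R^d)$ has norm at most $2\|a\|_\infty \|b\|_\infty$, since Plancherel yields $\|\mathcal{A}\|\leq \|a\|_\infty$ and $\|B\|\leq \|b\|_\infty$. As compact operators form a closed subspace of the bounded operators, it suffices to prove the lemma for $a$ and $b$ in dense subclasses; I take $a\in C^\infty(S^{d-1})$ (dense by Stone--Weierstrass via spherical harmonics) and $b\in C_c^\infty(\R^d)$ (dense in $C_0(\R^d)$), extending $a$ smoothly and homogeneously of degree zero to $\R^d\setminus\{0\}$.

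Next, fix $\chi\in C_c^\infty(\R^d)$ equal to $1$ on an open neighborhood of $\supp b$, and decompose
\[
C \;=\; M_\chi C \;+\; (1-M_\chi)\mathcal{A}M_b,
\]
since $(1-\chi)b\equiv 0$ kills the $B\mathcal{A}$ contribution in the second term. For the first summand, $M_\chi Cu$ is supported in $\supp \chi$, so it suffices to show $Cu\in H^1_{\rm loc}$ with $\|Cu\|_{H^1(K)}\lesssim \|u\|_{L^2}$ on every compact $K$; then Rellich--Kondrachov delivers compactness of $M_\chi C$. The $H^1$ bound rests on the Fourier identity
\[
\widehat{Cu}(\xi) \;=\; \int \hat b(\xi-\eta)\bigl[a(\xi/|\xi|) - a(\eta/|\eta|)\bigr]\,\hat u(\eta)\,d\eta,
\]
together with the mean-value inequality $|a(\xi/|\xi|) - a(\eta/|\eta|)|\lesssim |\xi-\eta|/\max(|\xi|,|\eta|)$, valid for $a$ smooth and homogeneous of degree zero away from the origin. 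This gain of one power of $|\xi|^{-1}$ is what produces the extra derivative on the physical side; combined with the rapid decay of $\hat b$, it yields $\|\nabla Cu\|_{L^2}\lesssim \|u\|_{L^2}$ once the low-frequency regime is handled separately.

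For the second summand $(1-M_\chi)\mathcal{A}M_b$, its integral kernel is
\[
K(x,y) \;=\; (1-\chi(x))\,\check a(x-y)\,b(y),
\]
where $\check a$ is smooth on $\R^d\setminus\{0\}$ and homogeneous of degree $-d$. Since $\chi\equiv 1$ on an open neighborhood of $\supp b$, there exists $\delta>0$ such that $|x-y|\geq\delta$ whenever $K(x,y)\neq 0$. Hence $|K(x,y)|\lesssim |x-y|^{-d}|b(y)|$, and a straightforward computation gives
\[
\iint |K(x,y)|^2\,dx\,dy \;\lesssim\; \|b\|_{L^2}^2 \int_{|z|\geq\delta}|z|^{-2d}\,dz \;<\;\infty,
\]
so this summand is Hilbert--Schmidt, hence compact. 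Summing the two compact pieces gives the lemma.

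The principal obstacle is the Sobolev-regularity claim $Cu\in H^1_{\rm loc}$ in the second paragraph: the mean-value inequality degenerates as $\max(|\xi|,|\eta|)\to 0$, so one must split the kernel integration into low- and high-frequency pieces, use smoothness of $\hat b$ to absorb the low-frequency contribution, and exploit the one-power gain only on the high-frequency part. This is the elementary incarnation of the pseudodifferential principle that commuting two zeroth-order operators produces a $(-1)$-order operator, and it is precisely the feature of symbols depending only on $\xi/|\xi|$ that the rest of the paper must generalize.
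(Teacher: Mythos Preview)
Your argument is correct, but it takes a different route from the paper's. The paper does not prove Lemma~\ref{scl} directly---it is cited from \cite{Tar}---and its own contribution is the generalization in Lemma~\ref{scl'}. That proof approximates $b$ so that $\hat b$ has compact support, writes $\widehat{Cu}(\xi)=\int\hat b(\xi-\eta)\bigl(a(\xi)-a(\eta)\bigr)\hat u(\eta)\,d\eta$, and then verifies the purely qualitative condition~\eqref{uvjet}: $|a(\xi)-a(\eta)|$ is uniformly small once $|\xi|,|\eta|$ are large and $\xi-\eta$ lies in a fixed ball. Compactness then follows (as in \cite[Lemma 28.2]{tar_book}) by splitting $C$ into a low-frequency Hilbert--Schmidt piece and a high-frequency piece of arbitrarily small operator norm. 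Your approach instead exploits the quantitative estimate $|a(\xi/|\xi|)-a(\eta/|\eta|)|\lesssim |\xi-\eta|/\max(|\xi|,|\eta|)$, specific to smooth degree-zero homogeneous symbols, to gain a full derivative and invoke Rellich--Kondrachov on the localized part, with a physical-space Hilbert--Schmidt bound for the far-away tail. This buys you more---an honest $H^1$ smoothing of the commutator---but, as you yourself note in your last paragraph, the derivative gain is tied to radial projection onto $S^{d-1}$; for the general admissible manifolds and fibrations of Section~2 one only has the asymptotic condition~\eqref{id_2} and no Lipschitz estimate, which is precisely why the paper's route through~\eqref{uvjet} is the one that generalizes.
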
 As we can see, the symbol $a$ given above is defined on
the unit sphere. Recently, in \cite{Ant2} the first commutation
lemma was extended for symbol $a$ defined on the parabolic manifold
$P=\{(\tau,\xi)\in \R\times \R^d:\;\tau^2+|\xi|^4=1 \}$, and then,
in an analog fashion, in \cite{JMSpa} on the ultra-parabolic
manifold $UP=\{(\tilde{\xi},\bar{\xi})\in \R^k\times
\R^{(d-k)}:\;|\tilde{\xi}|^2+|\hat{\xi}|^4=1 \}$. This enabled the
authors of \cite{Ant2} and \cite{JMSpa} to replace in Theorem
\ref{tbasic1} the sphere $S^{d-1}$ by $P$ and $UP$, respectively.

We have noticed that the proof of the first commutation lemma relies
only on the fact that if  we project any compact set $K$ on the
sphere along the rays issuing from the origin, the projection will
be smaller as the distance of $K$ from the origin is larger.
Furthermore, it is clear that we do not need to project the set
$K\subset \R^d$ along the rays -- the projection curves can be
arbitrary smooth nonintersecting curves fibrating the space (see
Figure 1). We will use this observation in Section 2 to replace the
sphere $S^{d-1}$ in Theorem \ref{tbasic1} by an arbitrary compact
simply connected Lipschitz manifold such that there exists a family
of regular nonintersecting curves issuing from the manifold and
fibrating $\R^d$.

{ In Section 3, we consider the fractional scalar conservation law:
\begin{equation}
\label{eq_frac} \sum\limits_{k=1}^d\pa^{\alpha_k}_{x_k} f_k(x,u)=0,
\end{equation} where $\alpha_k\in (0,1]$, $f_k\in BV(\R^d;C^1(\R))$,
$k=1,\dots,d$. We start by introducing a notion of quasi-solutions
to \eqref{eq_frac} which are basically functions $u\in
L^\infty(\R^d)$ such that for every $\lambda\in \R$, the operator
$\sum\limits_{k=1}^d\pa^{\alpha_k}_{x_k} {\rm
sgn}(u\!-\!\lambda)(f_k(x,u)\!-\!f_k(x,\lambda))$ is compact as
mapping from $W^{1,\infty}(\R^d)$ to $L_{loc}^1(\R^d)$ (for a more
precise definition see Definition \ref{q-sol}). In the case of the
classical scalar conservation law, the latter operator is nothing
but the entropy defect measure. The main result of the section is
the fact that under a genuine nonlinearity conditions (see
Definition \ref{non=deg}), any bounded sequence of quasi-solutions
to \eqref{eq_frac} is strongly $L^1_{loc}$-precompact.}

\begin{figure}[htp]
\begin{center}
  \includegraphics[width=2in]{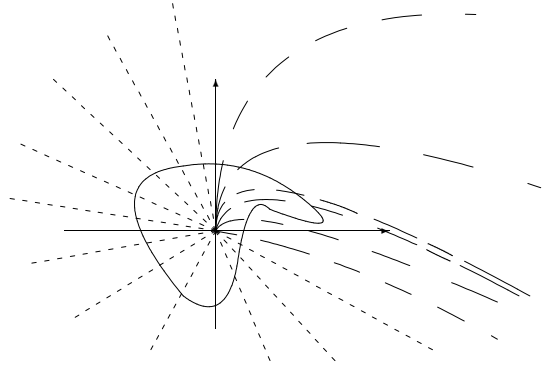}\\
  \caption{The manifold $P$ is represented by normal line. Fibres are dashed.
  Notice that a fibre must not intersect $P$ twice.}
  \label{sl1}
  \end{center}
\end{figure}

\section{The $H$-measures revisited}

In order to improve Theorem \ref{tbasic1}, we need a new variant of
the first commutation lemma. To introduce it, we need the following
operators. Let ${\cal A}$ be a multiplier operator with a symbol
$a\in C(\R^d)$, and $B$ be an operator of multiplication by a
function $b\in C_0(\R^d)$, given by the formulae:
\begin{align}
\label{oper_1}
&\F({\cal A}u)(\xi)=a(\xi)\F(u)(\xi) \ \ a.e. \ \ \xi\in \R^d,\\
\label{oper_2} &Bu(x)=b(x)u(x) \ \ a.e. \ \ x\in \R^d,
\end{align} where ${\cal F}$ is the Fourier transform.

Following the proof from \cite[Lemma 1.7]{Tar}, we shall see in
Lemma \ref{scl'} that the commutator $C={\cal A}B-B{\cal A}$ is a
compact operator from $L^2(\R^d)$ into $L^{2}(\R^d)$ if $a\in
L^\infty(\R^d)$ satisfies the following condition (see \cite[Lemma
28.2]{tar_book}): { \begin{equation} \label{uvjet} (\forall
R>0)(\forall \eps
>0)(\exists r>0)\; |\xi|, |\eta|>r \;\wedge\; \xi -\eta\in B(0,R)
\Rightarrow|a(\xi)-a(\eta)|<\eps,
\end{equation}
where $B(0,R)\subset \R^d$ is the ball centered in zero with the
radius $R$.}

Here, we want conditions that are more intuitive than \eqref{uvjet}.
They are given by the following definition.
\begin{definition}
Let $\Omega\subset \R^d$ be an arbitrary open subset of the
Euclidean space $\R^d$. We say that the set $\Omega$ admits a
complete fibration along the family of curves (below, $I$ denotes a
set of indices)
\begin{equation*}
{\cal C}=\{\varphi_\lambda: \R^+\to \Omega: \; \lambda\in I \}
\end{equation*} if for every $x\in \Omega$ there exist a unique
$t\in \R^+$ and unique $\lambda\in I$ such that
$x=\varphi_\lambda(t)$.
\end{definition}

Assume that we have a family of curves
\begin{equation}
\label{famofcurv} {\cal C}=\{\varphi_\lambda: \R^+\to \R^d: \;
\varphi_\lambda(t)=t\psi_{{\lambda}}(t); \; \lambda,
\psi_{{\lambda}}(t) \in S^{d-1};\,\psi_{{\lambda}}(1)=\lambda \},
\end{equation} parameterized by the distance of the origin, which
completely fibrates $\R^d\setminus\{0\}$. We have chosen the unit
sphere $S^{d-1}$ intentionally since we would like $\lambda\in
S^{d-1}$ to determine the "direction" of the curve
$\varphi_\lambda$.

Furthermore, assume that there exist a constant $c>0$ and an
increasing real function $f$ satisfying $f(z)\to \infty$ as $z\to
\infty$ such that, for any $\lambda_1,\lambda_2\in S^{d-1}$ and any
$t_1,t_2\in \R^+$, it holds:
\begin{equation}
\label{id_1}
|t_1\psi_{{\lambda}_1}(t_1)-t_2\psi_{{\lambda}_2}(t_2)|\geq
cf(\min\{t_1,t_2 \})|\lambda_1-\lambda_2|,
\end{equation}where $\psi_{{\lambda}}$ are defined in \eqref{famofcurv}.

Finally, let $a\in L^\infty(\R^d)$ and $a_\infty\in C(S^{d-1})$ be
functions such that:
\begin{equation}
\label{id_2} \lim\limits_{t\to
\infty}a(\varphi_\lambda(t))=a_\infty(\lambda), \ \ {\rm uniformly \
in} \ \lambda\in S^{d-1},
\end{equation}and let $b:\R^d\to \R$ be a continuous
function converging to zero at infinity. We associate to $a$ and $b$
operators ${\cal A}$ and $B$, respectively, as defined in
\eqref{oper_1} and \eqref{oper_2}. The following commutation lemma
holds.

\begin{lemma}
\label{scl'} The operator $C={\cal A}B-B{\cal A}$ is a compact
operator from $L^2(\R^d)$ into $L^{2}(\R^d)$.
\end{lemma}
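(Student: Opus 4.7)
The plan is to reduce Lemma~\ref{scl'} to the abstract criterion \eqref{uvjet} quoted from \cite[Lemma 28.2]{tar_book}: once we have verified that the symbol $a$ satisfies \eqref{uvjet}, compactness of the commutator $C=\mathcal{A}B-B\mathcal{A}$ from $L^2(\R^d)$ to $L^2(\R^d)$ follows immediately. So the whole proof reduces to extracting \eqref{uvjet} from the two hypotheses at hand: the separation estimate \eqref{id_1} along the fibration \eqref{famofcurv}, and the uniform asymptotic behaviour \eqref{id_2}.

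To verify \eqref{uvjet}, fix $R>0$ and $\eps>0$ and seek $r>0$ such that $|\xi|,|\eta|>r$ together with $\xi-\eta\in B(0,R)$ imply $|a(\xi)-a(\eta)|<\eps$. Because the family \eqref{famofcurv} fibrates $\R^d\setminus\{0\}$ and the parameter $t$ is the distance from the origin, we write uniquely
\[
\xi=t_1\psi_{\lambda_1}(t_1),\qquad \eta=t_2\psi_{\lambda_2}(t_2),
\]
with $t_1=|\xi|$, $t_2=|\eta|$ and $\lambda_1,\lambda_2\in S^{d-1}$. The separation estimate \eqref{id_1} yields
\[
|\lambda_1-\lambda_2|\leq \frac{|\xi-\eta|}{c\,f(\min\{t_1,t_2\})}\leq \frac{R}{c\,f(r)},
\]
and, since $f(z)\to\infty$, the right-hand side tends to $0$ as $r\to\infty$.

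Next, insert the boundary trace $a_\infty$ and apply the triangle inequality:
\[
|a(\xi)-a(\eta)|\leq |a(\varphi_{\lambda_1}(t_1))-a_\infty(\lambda_1)|+|a_\infty(\lambda_1)-a_\infty(\lambda_2)|+|a_\infty(\lambda_2)-a(\varphi_{\lambda_2}(t_2))|.
\]
The first and third terms are each bounded by $\eps/3$ for $t_1,t_2$ large, thanks to the uniform convergence in $\lambda$ guaranteed by \eqref{id_2}. For the middle term, $a_\infty$ is continuous on the compact sphere $S^{d-1}$, hence uniformly continuous; pick $\delta>0$ so that $|\lambda_1-\lambda_2|<\delta$ forces $|a_\infty(\lambda_1)-a_\infty(\lambda_2)|<\eps/3$, and then choose $r$ so large that simultaneously $R/(c\,f(r))<\delta$ and the two tails from \eqref{id_2} fall below $\eps/3$. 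This establishes \eqref{uvjet} and the lemma then follows from \cite[Lemma 28.2]{tar_book}.

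The only real subtlety is the coordination of the two independent asymptotic mechanisms through a single threshold $r$: we must force $\lambda_1,\lambda_2$ to be close via the growth of $f$ in \eqref{id_1}, and at the same time force $a$ to be close to its boundary value $a_\infty$ along the two fibres via the uniformity in \eqref{id_2}. Both effects are controlled by requiring $|\xi|,|\eta|$ to be large, so they can be handled by taking the maximum of the three thresholds, and no genuine obstacle arises.
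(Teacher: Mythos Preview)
Your proof is correct and follows essentially the same approach as the paper: both reduce the compactness of the commutator to verifying the abstract condition \eqref{uvjet} (via \cite[Lemma 28.2]{tar_book}), and both verify \eqref{uvjet} by the same three-term splitting through $a_\infty$, using \eqref{id_1} to force $\lambda_1,\lambda_2$ close and the uniform convergence \eqref{id_2} plus uniform continuity of $a_\infty$ on $S^{d-1}$ to control each term. The paper additionally spells out the preliminary reduction to $b\in C^1_0(\R^d)$ before invoking the abstract criterion, but this is already absorbed in the cited lemma you appeal to.
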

\begin{proof}
The proof initially follows steps from the proof of Tartar's First
commutation lemma. On the first step notice that we can assume $b\in
C^1_0(\R^d)$. Indeed, if we assume merely $b\in C_0(\R^d)$ then we
can uniformly approach the function $b$ by a sequence $(b_n)\in
C^1_0(\R^d)$ such that for every $n\in \N$ the function $\F(b_n)$
has a compact support. The corresponding sequence of commutators
$C_n={\cal A} B_n-B_n {\cal A}$, where $B_n(u)=b_n u$, converges in
norm toward $C$. So, if we prove that $C_n$ are compact for each
$n$, the same will hold for $C$ as well. Then, consider the Fourier
transform of the operator ${\cal C}$. It holds:
$$
\F({\cal C}u)(\xi)=\int_{\R^d}\F b(\xi-\eta)\left(a(\xi)-a(\eta)
\right)\F u(\eta) d\eta.
$$ So, following the proof of \cite[Lemma 1.7]{Tar} (or directly from \cite[Lemma
28.2]{tar_book}), to complete the proof of our lemma, it is enough
to prove \eqref{uvjet}.

First, notice that for all $\xi,\eta\in \R^d\setminus\{0\}$ such
that $\xi=\varphi_{\lambda_1}(t_1),\,\eta=\varphi_{\lambda_2}(t_2)$,
we have according to \eqref{id_1}
\begin{equation}
\label{id_3} |\lambda_1-\lambda_2|\leq \frac{|\xi-\eta|}{c
f(\min\{|\xi|,|\eta|\})}.
\end{equation} Now, let $M>0$ and $\eps>0$ be arbitrary, and
let $\xi,\eta\in \R^d\setminus\{0\}$ be such that $\xi-\eta\in
B(0,M)$.

According to our assumptions from Definition 4, there are unique
$\lambda_1,\lambda_2\in S^{d-1}$ and $t_1,t_2\in \R^+$ such that
$\xi=\varphi_{\lambda_1}(t_1),\;\eta=\varphi_{\lambda_2}(t_2)$.
Second, $S^{d-1}$ is compact, and so $a_\infty$ is uniformly
continuous:
$$(\exists \delta>0)\;|\lambda_1-\lambda_2|<\delta \Rightarrow |a_\infty(\lambda_1)-a_\infty(\lambda_2)|<\frac{\eps}{3}.$$
Third, according to \eqref{id_2} there is $R_1>0$ such that
$$t_1,t_2>R_1\Rightarrow|a(\xi)-a_\infty(\lambda_1)|<\frac{\eps}{3}\mbox{
and } |a(\eta)-a_\infty(\lambda_2)|<\frac{\eps}{3}.$$

Finally, \eqref{id_3} imply $$(\exists
R_2>0)\;t_1,t_2>R_2\Rightarrow|\lambda_1-\lambda_2|\leq\frac{|\xi-\eta|}{cf(R_2)}\leq\frac{\mbox{diam}K}{cf(R_2)}<\delta,$$
and so for $R=\max\{R_1,R_2\},\;|\xi|,|\eta|\geq R\mbox{ and
}\xi-\eta\in B(0,M)$ we have
$$|a(\xi)-a(\eta)|\leq|a(\xi)-a_\infty(\lambda_1)|+|a_\infty(\lambda_1)-a_\infty(\lambda_2)|+|a_\infty(\lambda_2)-a(\eta)|<\frac{\eps}{3}+\frac{\eps}{3}
+\frac{\eps}{3}=\eps.$$

The proof is over. \end{proof}

\begin{definition}
\label{novo_1} We say that a manifold $P\subset \R^d$ is admissible
if there exists a fibration of the space $\R^d\backslash \{0\}$
along a family of curves ${\cal C}$ of form \eqref{famofcurv}
 such that for every $y\in P$ there exists a unique
$\varphi_{\lambda(y)} \in {\cal C}$ such that $y\in
\{\varphi_{\lambda(y)}(t):\; t\in \R^+ \}$ and $\R^d\backslash
\{0\}=\dot\bigcup_{y\in P} \{\varphi_{\lambda(y)}(t):\; t\in \R^+
\}$, where $\dot\bigcup$ denotes the disjoint union.

We say that the function $\tilde{\psi}\in C(\R^d)$ is an admissible
symbol if for every $\varphi_\lambda\in {\cal C}$ it holds $
\lim\limits_{t\to \infty}\tilde{\psi}(\varphi_\lambda(t))=\psi(y)$,
where $y\in P$ is such that $y\in \{\varphi_\lambda(t):\; t\in \R^+
\}$, $\lambda\in S^{d-1}$, and $\psi\in C(P)$.

We shall also write
\begin{equation}
\label{admi_symb} \lim\limits_{\xi\to
\infty}(\tilde{\psi}-(\psi\circ\pi_P))(\xi)=0,
\end{equation}where $\pi_P$ is the projection of the point $\xi$ on
the manifold $P$ along the fibres ${\cal C}$.
\end{definition}

We shall define an extension of the $H$-measures on the set
$\R^d\times P$, where $P$ is a manifold admissible in the sense of
Definition \ref{novo_1}. The following theorem holds:
\begin{theorem}
\label{tbasic1_novo} Denote by $P$ a manifold admissible in the
sense of Definition \ref{novo_1}. If $(u_n)=((u_n^1,\dots, u_n^r))$
is a sequence in $L^2(\R^d;\R^r)$ such that $u_n\rightharpoonup 0$
in $L^2(\R^d;\R^r)$, then there exists its subsequence $(u_{n'})$
and a positive definite matrix of complex Radon measures
$\mu=\{\mu^{ij}\}_{i,j=1,\dots,r}$ on $\R^d\times P$ such that for
all $\varphi_1,\varphi_2\in C_0(\R^d)$ and an admissible symbol
$\tilde{\psi}\in C(\R^d)$:
\begin{equation}
\label{basic1_new}
\begin{split}
\lim\limits_{n'\to \infty}\int_{\R^d}&(\varphi_1
u^i_{n'})(x)\overline{{\cal A}_{\tilde{\psi}}(\varphi_2
u^j_{n'})(x)}dx=\langle\mu^{ij},\varphi_1\overline{\varphi_2}\psi
\rangle\\&= \int_{\R^d\times
P}\varphi_1(x)\overline{\varphi_2(x)}\psi(\xi)d\mu^{ij}(x,\xi), \ \
\xi\in P,
\end{split}
\end{equation}where ${\cal A}_{\tilde{\psi}}$ is a multiplier operator with the (admissible) symbol $\tilde{\psi}\in
C(\R^{d})$, and $\psi\in C(P)$ is such that \eqref{admi_symb} is
satisfied.
\end{theorem}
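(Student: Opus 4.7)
The plan is to follow the classical Tartar--Gerard construction of $H$-measures with our generalized commutation lemma (Lemma~\ref{scl'}) replacing the classical one. For $\varphi_1,\varphi_2\in C_0(\R^d)$ and an admissible symbol $\tilde\psi\in C(\R^d)$ associated to $\psi\in C(P)$ via \eqref{admi_symb}, consider the sesquilinear form
\[
\Lambda_n^{ij}(\varphi_1,\varphi_2,\tilde\psi) := \int_{\R^d} (\varphi_1 u_n^i)(x)\, \overline{{\cal A}_{\tilde\psi}(\varphi_2 u_n^j)(x)}\, dx.
\]
Plancherel and the $L^2$-boundedness of the multiplier yield $|\Lambda_n^{ij}| \le \|\varphi_1\|_\infty \|\varphi_2\|_\infty \|\tilde\psi\|_\infty \sup_n \|u_n\|_{L^2}^2$. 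Fixing a smooth cutoff $\chi$ that vanishes near the origin and equals $1$ outside a neighbourhood of it, the canonical extension $\tilde\psi := \chi\cdot(\psi\circ\pi_P)$ parametrises admissible symbols by $\psi\in C(P)$ with $\|\tilde\psi\|_\infty = \|\psi\|_\infty$. Separability of $C_0(\R^d)$ and $C(P)$ and a diagonal extraction then produce a subsequence $(u_{n'})$ along which $\Lambda_{n'}^{ij}$ converges for every admissible triple.

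Applying Lemma~\ref{scl'} with $a = \tilde\psi$ and $b = \varphi_2$ shows that the commutator ${\cal A}_{\tilde\psi}B_{\varphi_2} - B_{\varphi_2}{\cal A}_{\tilde\psi}$ is compact on $L^2(\R^d)$, hence maps the weakly null sequence $(u_{n'}^j)$ to a strongly null sequence. Therefore
\[
\lim_{n'} \Lambda_{n'}^{ij}(\varphi_1,\varphi_2,\tilde\psi) = \lim_{n'} \int \varphi_1 \overline{\varphi_2}\, u_{n'}^i\, \overline{{\cal A}_{\tilde\psi} u_{n'}^j}\, dx,
\]
so the limit depends on $(\varphi_1,\varphi_2)$ only through the product $\varphi := \varphi_1\overline{\varphi_2} \in C_0(\R^d)$. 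The remaining task is to show the limit depends on $\tilde\psi$ only through its trace $\psi = \tilde\psi|_P$, which I expect to be the main technical hurdle, since unlike the classical case (where $\psi$ extends canonically by homogeneity of degree zero) we now have infinitely many admissible extensions.

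For two admissible extensions $\tilde\psi_1,\tilde\psi_2$ of the same $\psi$, the difference $g := \tilde\psi_1 - \tilde\psi_2$ is bounded and, by \eqref{admi_symb}, satisfies $g(\xi)\to 0$ as $|\xi|\to\infty$. Using the commutation lemma once more on the $\varphi_2$-side and Plancherel, the difference between the two limits reduces to
\[
\lim_{n'} \int_{\R^d} \overline{g(\xi)}\, \widehat{\varphi u_{n'}^i}(\xi)\, \overline{\widehat{u_{n'}^j}(\xi)}\, d\xi.
\]
Its part over $|\xi|>R$ is bounded by $\sup_{|\xi|>R}|g|\cdot \|\widehat{\varphi u_{n'}^i}\|_{L^2}\|\widehat{u_{n'}^j}\|_{L^2}$, arbitrarily small for $R$ large. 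On $|\xi|\le R$ the pointwise convergence $\widehat{\varphi u_{n'}^i}(\xi)\to 0$ (since $u_{n'}^i\rightharpoonup 0$ and $\varphi\in L^2$), together with the uniform bound $\|\varphi\|_{L^2}\|u_{n'}^i\|_{L^2}$, lets dominated convergence force $\|\widehat{\varphi u_{n'}^i}\|_{L^2(B_R)}\to 0$; Cauchy--Schwarz against the uniform $L^2$-bound on $\widehat{u_{n'}^j}$ finishes the estimate. This yields a well-defined sesquilinear form $L^{ij}(\varphi,\psi)$ on $C_0(\R^d)\times C(P)$ with $|L^{ij}(\varphi,\psi)| \le C\|\varphi\|_\infty\|\psi\|_\infty$.

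By the Stone--Weierstrass theorem the algebra of products $\varphi(x)\psi(\xi)$ is dense in $C_0(\R^d\times P)$ (using that $P$ is compact), so $L^{ij}$ extends uniquely to a bounded linear functional on $C_0(\R^d\times P)$ and the Riesz representation theorem furnishes the complex Radon measure $\mu^{ij}$. For positive definiteness, given $\zeta\in\C^r$ and $\psi\ge 0$ on $P$, set $w_{n'} := \sum_i \zeta_i u_{n'}^i$ and let $\tilde\chi := \chi\cdot(\sqrt{\psi}\circ\pi_P)$, a real-valued admissible extension of $\sqrt{\psi}$. Then ${\cal A}_{\tilde\chi}$ is self-adjoint on $L^2$ and a Plancherel computation gives
\[
\int \varphi w_{n'}\, \overline{{\cal A}_{\tilde\chi^2}(\varphi w_{n'})}\, dx \;=\; \|{\cal A}_{\tilde\chi}(\varphi w_{n'})\|_{L^2}^2 \;\ge\; 0.
\]
Since $\tilde\chi^2$ is an admissible extension of $\psi$, the independence-of-extension step already established lets me pass to the limit and conclude $\sum_{i,j}\zeta_i\bar\zeta_j \langle\mu^{ij},|\varphi|^2\psi\rangle \ge 0$, which is the required positive-definiteness of the matrix of measures.
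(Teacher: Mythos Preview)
Your proof is correct and follows essentially the same route as the paper: both reduce an admissible symbol $\tilde\psi$ to its trace $\psi$ on $P$ by exploiting that $\tilde\psi-\psi\circ\pi_P\to 0$ at infinity, and then invoke the generalised commutation lemma (Lemma~\ref{scl'}) exactly as in Tartar's classical construction to obtain a bounded bilinear functional on $C_0(\R^d)\times C(P)$. The only cosmetic difference is in the representation step---the paper appeals to the Schwartz kernel theorem together with the Schwartz lemma on non-negative distributions, whereas you use Stone--Weierstrass and the Riesz representation theorem and give an explicit positive-definiteness argument via $\|{\cal A}_{\sqrt{\psi}}(\varphi w_{n'})\|_{L^2}^2\ge 0$; both are standard and equivalent here.
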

\begin{proof}

First, notice that
\begin{align}
\label{h-mjere_1} \int_{\R^d}&(\varphi_1 u^i_{n'})(x)\overline{{\cal
A}_{\tilde{\psi}}(\varphi_2
u^j_{n'})(x)}dx\\
&=\int_{\R^d}\F(\varphi_1 u^i_{n'})(\xi)\overline{\F(\varphi_2
u^j_{n'})(\xi)}\tilde{\psi}(\xi)d\xi, \nonumber
\end{align}according to the Plancherel theorem. Then, denote by
$\pi_P(x)$ the projection of the point $x\in \R^d$ on the manifold
$P$ along the corresponding fibres. It holds
\begin{align}
\label{h-mjere_2} &\int_{\R^d}\F(\varphi_1
u^i_{n'})(\xi)\overline{\F(\varphi_2
u^j_{n'})(\xi)}\tilde{\psi}(\xi)d\xi\\
&=\int_{\R^d}\F(\varphi_1 u^i_{n'})(\xi)\overline{\F(\varphi_2
u^j_{n'})(\xi)}(\psi\circ\pi_P)(\xi))d\xi\nonumber\\&+
\int_{\R^d}\F(\varphi_1 u^i_{n'})(\xi)\overline{\F(\varphi_2
u^j_{n'})(\xi)}\left(\tilde{\psi}(\xi)-(\psi\circ\pi_P)(\xi)\right)d\xi.
\nonumber
\end{align}From the fact that the symbol $\tilde{\psi}$ is admissible in the sense of Definition \ref{novo_1} and the Lebesgue dominated converges theorem,
it follows
$$
\lim\limits_{n'\to \infty}\int_{\R^d}\F(\varphi_1
u^i_{n'})(\xi)\overline{\F(\varphi_2
u^j_{n'})(\xi)}\left(\tilde{\psi}(\xi)-(\psi\circ\pi_P)(\xi)\right)d\xi=0.
$$ From here,  \eqref{h-mjere_1} and \eqref{h-mjere_2}, we conclude
\begin{align*}
&\lim\limits_{n'\to \infty}\int_{\R^d}(\varphi_1
u^i_{n'})(x)\overline{{\cal A}_{\tilde{\psi}}(\varphi_2
u^j_{n'})(x)}dx\\
&=\lim\limits_{n'\to \infty}\int_{\R^d}(\varphi_1
u^i_{n'})(x)\overline{{\cal A}_{\psi\circ \pi_P}(\varphi_2
u^j_{n'})(x)}dx, \nonumber
\end{align*} implying that, in order to prove \eqref{basic1_new}, it is enough
to prove it for the multipliers with symbols defined on $P$. Now,
the proof completely follows the one of \cite[Theorem 1.1]{Tar}. Let
us briefly recall it.

Notice that, according to the first commutation lemma (Lemma
\ref{scl'}), the mapping
$$
(\varphi_1\overline{\varphi_2},\psi)\mapsto \int_{\R^d}(\varphi_1
u^i_{n'})(x)\overline{{\cal A}_\psi(\varphi_2 u^j_{n'})(x)}dx
$$ is a positive bilinear functional on $C_0(\R^d)\times  C(P)$. According to the Schwartz kernel theorem, the functional
can be extended to a continuous linear functional on ${\cal
D}(\R^d\times P)$. Since it is positive, due to the Schwartz lemma
on non-negative distributions, it follows that the mentioned
extension is a Radon measure.
\end{proof}

\begin{remark}
\label{rem_tar} If we assume that the sequence $(u_n)$ defining the
$H$-measure is bounded in $L^p(\R^d)$ for $p>2$, then we can take
the test functions $\varphi_1, \varphi_2$ from Theorem \ref{tbasic1}
such that $\varphi_1\in L^q(\R^d)$ where $1/q+2/p\leq 1$, and
$\varphi_2\in C_0(\R^d)$ (see \cite[Corollary 1.4]{Tar} and
\cite[Remark 2, a)]{pan_arma}).
\end{remark}

\section{Strong precompactness property of a sequence of quasisolutions to a fractional scalar conservation law}

Differential equations involving fractional derivatives have
received considerable amount of attention recently (see e.g.
\cite{ali1, droniou} and references therein). Here, we shall
consider a sequence of quasi-solutions to a (purely) fractional
scalar conservation law. The definition of a quasi-solution for a
classical conservation law can be found in \cite[Definition
1.2]{pan_jhde}. It actually represents a slightly relaxed version of
Kru\v zkov's admissibility conditions \cite{Kru}. Among other facts,
the mentioned conditions are obtained relying on the Leibnitz rule
for the derivatives of product. This rule does not hold for the
fractional derivatives. Therefore, we need to modify slightly
Panov's definition of quasisolutions. The motivation for the
modification lies in the procedure from \cite{Sazh} (see also
\cite{MA1}) where the existence of solution to an ultra-parabolic
equation is proved relying on the $H$-measures and compactness of
appropriate operators.

\begin{definition}
\label{q-sol} We say that a function $u\in L^\infty(\R^d)$ is a
quasisolution to
 equation \eqref{eq_frac}
 if for every $\lambda\in \R$, $\varphi_1\in
C^\infty_c(\R^d)$ and $\varphi_2\in L^\infty(\R^d)$, it holds
\begin{equation}
\label{novo_4} \begin{split} \int_{\R^d}\!\sum\limits_{k=1}^d\!{\rm
sgn}(u\!-\!\lambda)(f_k(x,u)\!-\!f_k(x,\lambda))\varphi_1(x)\overline{{\cal
A}_{\frac{(i\xi_k)^{\alpha_k}}{|\xi_1|^{\alpha_1}\!+\!|\xi_2|^{\alpha_2}\!+\!\dots\!+\!|\xi_d|^{\alpha_d}}}\varphi_2
(x)}dx&\\=\int_{\R^d}L_{\lambda,\varphi_1}[\varphi_2]dx&,
\end{split}
\end{equation} where

\begin{itemize}

\item ${\cal A}_{\frac{(i\xi_k)^{\alpha_k}}{|\xi_1|^{\alpha_1}+|\xi_2|^{\alpha_2}+\dots+|\xi_d|^{\alpha_d}
}}$ is a multiplier
operator with the symbol \\
$\frac{(i\xi_k)^{\alpha_k}}{|\xi_1|^{\alpha_1}+|\xi_2|^{\alpha_2}+\dots+|\xi_d|^{\alpha_d}}$;

\item the linear operator
$L_{\lambda,\varphi_1}: L^\infty(\R^d)\to L^1(\R^d)$ is compact.

\end{itemize}

\end{definition}

{ The operator $L_{\lambda,\varphi_1}$ we call an {\bf entropy
defect operator}. In the case of classical scalar conservation laws,
the operators $L_{\lambda,\varphi_1}$, $\lambda\in \R$, will
correspond to the appropriate entropy defect measures weighted by
$\varphi_1{\cal A}_{\frac{1}{|\xi|}}(\cdot)$, where ${\cal
A}_{\frac{1}{|\xi|}}$ is the multiplier operator with the symbol
$\frac{1}{|\xi|}$.}

An interesting question might be how to define a weak solution to
\eqref{eq_frac} analog to the standard weak solution for a PDE of an
integer order. Let us recall how one can (formally) reach to a
definition of weak solution for a first order partial differential
equation.

So, for a function $f(x,\lambda)=(f_1(x,\lambda),\dots,
f_d(x,\lambda))\in BV(\R^d;C(\R))$, $(x,\lambda)\in \R^d\times \R$,
consider
$$
{\rm div} f(x,u)=0, \ \ u\in L^\infty(\R^d).
$$ Finding the Fourier transform of the last expression, we obtain
\begin{equation}
\label{motiv_1} \sum\limits_{k=1}^di\xi_k \F(f_k(\cdot,u))(\xi)=0, \
\ \xi\in \R^d.
\end{equation}
Then, take an arbitrary function $\varphi\in C^1_c(\R^d)$ and
multiply \eqref{motiv_1} by $\overline{\F(\varphi)}(\xi)$ (inverse
Fourier transform of $\varphi$). We obtain
\begin{align*}
&\sum\limits_{k=1}^di\xi_k
\F(f_k(\cdot,u))(\xi)\overline{\F(\varphi)}(\xi)=-\sum\limits_{k=1}^d
\F(f_k(\cdot,u))(\xi)\overline{i\xi_k
\F(\varphi)}(\xi)\\&=-\sum\limits_{k=1}^d
\F(f_k(\cdot,u))(\xi)\overline{ \F(\pa_{x_k}\varphi)}(\xi)=0.
\end{align*}Integrating this over $\xi\in \R^d$ and applying the
Plancherel formula, we get
$$
-\int_{\R^d}\sum\limits_{k=1}^d \F(f_k(\cdot,u))(\xi)\overline{
\F(\pa_{x_k}\varphi)}(\xi)d\xi=-\int_{\R^d}\sum\limits_{k=1}^d
f_k(x,u){\pa_{x_k}\varphi(x)} dx=0,
$$ which is the classical definition of a weak solution.

From the latter considerations, it is natural to define an
integrable function $u$ to be a weak solution to \eqref{eq_frac} if
for every $\varphi\in C_c^\infty(\R^d)$, it holds
\begin{equation*}
\int_{\R^d}\sum\limits_{k=1}^d
f_k(x,u(x))\overline{\pa^{\alpha_k}_{x_k} \varphi (x)}dx=0,
\end{equation*} where $\pa^{\alpha_k}_{x_k}$ is the multiplier
operator with the symbol $(i\xi_k)^{\alpha_k}$, $k=1,\dots,d$.

Existence of a sequence of quasisolutions to \eqref{eq_frac} is an
open question which we will deal with in a future. Existence of the
sequence of quasisolutions together with the strong precompactness
result (Theorem \ref{th_frac}) would immediately give existence of a
weak solution to \eqref{eq_frac}.

The latter notion of quasisolution can be rewritten in the so called
kinetic formulation which appeared to be very powerful in the field
of conservation laws \cite{Lio3}. It reduces equation
\eqref{eq_frac} to a linear equation with the right-hand side in the
form of a distribution of order one.

It is enough to find derivative in $\lambda$ to \eqref{novo_4}.
Thus, in the sense of distributions, we have
\begin{align}
\label{pro-1'} &-\int_{\R^d}\sum\limits_{k=1}^d h(x,\lambda)
\pa_\lambda f_k(x,\lambda)
 \varphi_1(x)\overline{{\cal A}_{\frac{(i\xi_k)^{\alpha_k}}{|\xi_1|^{\alpha_1}+|\xi_2|^{\alpha_2}+\dots+|\xi_d|^{\alpha_d}
}}\varphi_2 (x)}dx\\&=\int_{\R^d}\pa_\lambda
L_{\lambda,\varphi_1}[\varphi_2]dx, \nonumber
\end{align} where $h(x,\lambda)={\rm
sgn}(u(x)-\lambda)$, or equivalently, for any $\rho\in C^1_0(\R)$
\begin{align}
\label{pro-1}
&\int_{\R}\int_{\R^d}\sum\limits_{k=1}^dh(x,\lambda)\pa_\lambda
f(x,\lambda)\rho(\lambda)\varphi_1(x)\overline{{\cal
A}_{\frac{(i\xi_k)^{\alpha_k}}{|\xi_1|^{\alpha_1}+|\xi_2|^{\alpha_2}+\dots+|\xi_d|^{\alpha_d}
}}\varphi_2 (x)}dx d\lambda\\&=\int_{\R}\int_{\R^d}
L_{\lambda,\varphi_1}[\varphi_2] \rho'(\lambda) dx d\lambda.
\nonumber
\end{align}

We shall prove that under a genuine nonlinearity condition for the
flux function $f(x,\lambda)=(f_1(x,\lambda),\dots,f_d(x,\lambda))$
from the previous definition, a sequence of quasisolutions to
\eqref{eq_frac} is strongly precompact in $L^1_{loc}(\R^d)$.

\begin{definition}
\label{non=deg} We say that equation \eqref{eq_frac} is genuinely
nonlinear if for almost every $x\in {\bf R}^d$ the mapping
\begin{gather}
\lambda \mapsto \sum\limits_{k=1}^d (i\xi_k)^{\alpha_k}
f_k(x,\lambda),
 \label{nondeg}
\end{gather} where $i$ is the imaginary unit, is not identically equal to zero on any set of positive measure
$X\subset \R$.
\end{definition}

To continue, denote by $P=\{\xi\in \R^d:\;
\sum\limits_{k=1}^d|\xi_k|^{\alpha_k}=1 \}$ where $\alpha_k$,
$k=1,\dots,d$, are given in \eqref{eq_frac}. Notice that the
manifold $P$ is admissible manifold in the sense of Definition
\ref{novo_1}. For the family ${\cal C}$ from Definition \ref{novo_1}
corresponding to the manifold $P$, we will take the family of curves
defined by

\begin{equation}
\label{prop-f} \xi_k(t)=\eta_k t^{1/\alpha_k}, \ \  t\geq 0, \ \
k=1,\dots,d, \ \ (\eta_1,\dots,\eta_d)\in P
\end{equation} Therefore, there exists an $H$-measure $\mu$ defined on $\R^d\times
P$ as given in Theorem \ref{tbasic1_novo}.

\begin{remark}
Remark that there can be several manifolds (compare \cite{Ant} and
\cite{Ant2} in the parabolic case) as well as several fibrations
that we could use. If we need a smoother manifold, we could take
$\tilde{P}=\{\xi\in \R^d:\;
\left(\sum\limits_{k=1}^d|\xi_k|^{2\alpha_k}\right)^{1/2}=1 \}$.
Also, we can take several fibrations, but the one that should be
used here is exactly \eqref{prop-f} since in that case the symbols
$\frac{(i\xi_k)^{\alpha_k}}{|\xi_1|^{\alpha_1}+|\xi_2|^{\alpha_2}+\dots+|\xi_d|^{\alpha_d}}$,
 $k=1,\dots,d$, are admissible test functions in \eqref{basic2_new} and we can pass to the limit as $n'\to \infty$ in \eqref{pro-5}.
We would like to thank to E.Yu.Panov for helping us to clear up this
situation.
\end{remark}

To proceed, denote by $(u_n)$ a family of quasi-solutions to
\eqref{eq_frac} satisfying the non-degeneracy condition in the sense
of Definition \ref{non=deg}. The following theorem holds:

\begin{theorem}
\label{th_frac} Let $(u_n)$ be a bounded sequence of quasi-solutions
to \eqref{eq_frac}. Assume that there exists a subsequence (not
relabeled) $(u_n)$ of the given sequence such that, for every
$\lambda \in \R$ and $\varphi_1\in C^\infty_c(\R^d)$, the
corresponding sequence of entropy defects operators
$(L^n_{\lambda,\varphi_1})$ admits a limit in the sense that there
exists a compact operator $L_{\lambda,\varphi_1}: L^\infty(\R^d)\to
L^1(\R^d)$ such that for any $\rho\in C^1_0(\R)$ and any sequence
$(\varphi_n)$ weakly-$\star$ converging to zero in $L^\infty(\R^d)$,
it holds
$$
\lim\limits_{n\to
\infty}\int_{\R}\int_{\R^d}\left(L^n_{\lambda,\varphi_1}[\varphi_n]-L_{\lambda,\varphi_1}[\varphi_n]\right)\rho(\lambda)dx
d\lambda=0.
$$

Then, the sequence $(u_n)$ is strongly precompact in
$L^1_{loc}(\R^d)$.
\end{theorem}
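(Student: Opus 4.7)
The plan is to adapt the Panov-type $H$-measure strategy (see \cite{pan_arma, pan_jhde, MA1, Sazh}) to the fractional setting, using the generalized $H$-measures from Section 2. After extracting a weak-$\star$ limit $u\in L^\infty(\R^d)$ of $(u_n)$, set $h_n(x,\lambda)={\rm sgn}(u_n(x)-\lambda)$ and, along a further subsequence, $h_n\rightharpoonup h$ weakly-$\star$ in $L^\infty(\R^d\times\R)$. By the standard reduction, strong $L^1_{loc}(\R^d)$-precompactness of $(u_n)$ is equivalent to the Young measures of $(u_n)$ being Dirac, which in turn is equivalent to $w_n:=h_n-h\to 0$ strongly in $L^2_{loc}(\R^d\times\R)$. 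Hence it suffices to prove that the $H$-measure associated to $w_n$ vanishes.

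Next, I would apply Theorem \ref{tbasic1_novo} with the admissible manifold $P=\{\xi\in\R^d:\sum_k|\xi_k|^{\alpha_k}=1\}$ and the fibration \eqref{prop-f}. The crucial observation is that along each fiber $\xi_k(t)=\eta_k t^{1/\alpha_k}$, $(\eta_1,\ldots,\eta_d)\in P$, the symbols $\frac{(i\xi_k)^{\alpha_k}}{\sum_j|\xi_j|^{\alpha_j}}$ appearing in Definition \ref{q-sol} are constant in $t$, and in particular admissible in the sense of Definition \ref{novo_1}. By a measurable selection in $\lambda$ (as in \cite{pan_arma}), this produces a family $\{\mu_\lambda\}_{\lambda\in\R}$ of nonnegative generalized $H$-measures on $\R^d\times P$ encoding the microlocal loss of $L^2_{loc}$-compactness of $w_n(\cdot,\lambda)$.

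For the localization step I would use the kinetic relation \eqref{pro-1}. Choose $\varphi_2=\varphi_1 w_n(\cdot,\lambda)$ (suitably localized), multiply by $\rho(\lambda)$, and pass to the limit $n\to\infty$ via the Plancherel identity and Theorem \ref{tbasic1_novo}. The left-hand side converges to a pairing of $\mu_\lambda$ against the admissible symbol $\sum_k(i\xi_k)^{\alpha_k}\pa_\lambda f_k(x,\lambda)/\sum_j|\xi_j|^{\alpha_j}$, integrated against $\rho$; the right-hand side vanishes thanks precisely to the hypothesis that $L^n_{\lambda,\varphi_1}$ converges, in the stated sense, to a compact operator $L_{\lambda,\varphi_1}$ applied to the weakly-$\star$ vanishing sequence $\varphi_1 w_n$. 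Varying $\rho$ and $\varphi_1$ yields, for a.e.\ $\lambda\in\R$,
\[
\mathrm{supp}\,\mu_\lambda\subset\Big\{(x,\xi)\in\R^d\times P:\;\sum_{k=1}^d(i\xi_k)^{\alpha_k}\pa_\lambda f_k(x,\lambda)=0\Big\}.
\]

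Finally, the genuine nonlinearity condition (Definition \ref{non=deg}) forces $\mu_\lambda=0$ for a.e.\ $\lambda$: if $\mu_\lambda$ were nonzero on a positive-measure set of $\lambda$'s, integrating the support relation in $\lambda$ would make $\sum_k(i\xi_k)^{\alpha_k} f_k(x,\lambda)$ constant in $\lambda$ on a positive-measure set for a $\mu_\lambda$-positive set of $(x,\xi)$, contradicting Definition \ref{non=deg}. Consequently $w_n\to 0$ strongly in $L^2_{loc}(\R^d\times\R)$, and the $L^1_{loc}$-precompactness of $(u_n)$ follows. The main obstacle is the localization step: $w_n=h_n-h$ does not satisfy the quasi-solution relation itself, so one must rewrite \eqref{pro-1} in a form pairing $w_n$ against $w_n$, and the novel compactness hypothesis on $L^n_{\lambda,\varphi_1}$ is exactly what is needed to absorb the residual entropy defect terms in this rewriting.
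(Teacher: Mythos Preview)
Your proposal is essentially the paper's own strategy: pass to the kinetic variables $h_n(x,\lambda)={\rm sgn}(u_n-\lambda)$, form $w_n=h_n-h$, build generalized $H$-measures on $\R^d\times P$ with the admissible manifold $P=\{\sum_k|\xi_k|^{\alpha_k}=1\}$ and fibration \eqref{prop-f}, use the quasi-solution relation together with the compactness hypothesis on $L^n_{\lambda,\varphi_1}$ to localize, and finish with genuine nonlinearity. The ingredients and references you cite are exactly those the paper invokes.

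The one place your sketch departs from the paper is the localization step, and there your version is not quite right as written. You propose to insert $\varphi_2=\varphi_1 w_n(\cdot,\lambda)$ directly into \eqref{pro-1}. But \eqref{pro-1} is obtained from \eqref{novo_4} by differentiating in $\lambda$ with $\varphi_2$ held \emph{independent of $\lambda$}; if $\varphi_2$ depends on $\lambda$, the differentiation produces extra terms you have not accounted for. The paper avoids this by using the two--parameter $H$-measure family $\{\mu^{pq}\}_{p,q\in E}$ of Theorem \ref{tbasic1_novo'} rather than a single diagonal family $\{\mu_\lambda\}$: one fixes a value $p$ and substitutes $\varphi_2(x)=w_{n'}(x,p)\phi_2(x,p)$, which is independent of the integration variable $\lambda$ in \eqref{pro-5}. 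Passing to the limit yields an identity for the \emph{off-diagonal} measures $\mu^{\lambda p}$; a mollification $\rho(\lambda)=\eps^{-1}\tilde\rho((\lambda-q)/\eps)\bar\rho((q+\lambda)/2)$, $\phi_2(x,p)=\eps^{-1}\tilde\rho((p-q)/\eps)\bar\rho((q+p)/2)\varphi_1(x)$ and the continuity of $(p,q)\mapsto\mu^{pq}$ stated in part 2 of Theorem \ref{tbasic1_novo'} then reduce to the diagonal $\mu^{qq}$, at which point \cite[Theorem 5]{msb95} and Definition \ref{non=deg} give $\mu^{qq}\equiv 0$. This two--parameter device is also what makes the final step clean: with only a single-index family $\mu_\lambda$ you would face the difficulty that ${\rm supp}\,\mu_\lambda$ could move with $\lambda$, so the pointwise support inclusion you wrote does not by itself contradict Definition \ref{non=deg}.
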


Notice that we have the situation from the latter theorem in the
case of a classical scalar conservation law { (see e.g. \cite{MA1,
NHM_mit} and the comments after Definition \ref{q-sol})}.

{ Denote
\begin{equation}
\label{sgn} h_n(x,\lambda)={\rm sgn}(u_n(x)-\lambda)
\end{equation}and assume that for a function $h\in L^\infty(\R^d\times\R)$, it holds
\begin{equation}
\label{limit} h_n(x,\lambda) \def\Dscon{\relbar\joinrel\dscon}
\povrhsk\ast h(x,\lambda) \ \ {\rm in} \ \ L^\infty(\R^d)
\end{equation} along a subsequence of the
sequence $(h_n)$. } Taking Remark \ref{rem_tar} into account, the
following extension of Theorem \ref{tbasic1_novo} can be proved in
the exactly same way as \cite[Theorem 3]{msb95} (see also
\cite[Remark 2, a)]{pan_arma}, \cite[Theorem N]{Sazh},
\cite[Proposition 2]{JMSpa}):
\begin{theorem}
\label{tbasic1_novo'} 1. { For the sequence $(h_n)$ and the function
$h$ defined by \eqref{sgn} and \eqref{limit}, respectively,} there
exists a set $E\subset \R$ of a full measure such that there exists
a family of complex Radon measures $\mu=\{\mu^{pq}\}_{p,q\in E}$ on
$\R^d\times P$ such that there exists a subsequence $(h_{n'}-h)$ of
the sequence $(h_{n}-h)$ such that for all $\varphi_1\in L^2(\R^d)$,
$\varphi_2\in C_c(\R^d)$ and a symbol $\psi\in C(\R^d)$ admissible
in the sense of Definition \ref{novo_1}:
\begin{equation}
\label{basic2_new}
\begin{split}
\lim\limits_{n'\to \infty}\int_{\R^d}&\varphi_1(x)
(h_{n'}-h)(x,p)\overline{{\cal A}_\psi(\varphi_2(\cdot)
(h_{n'}-h)(\cdot,q))(x)}dx\\&=\langle\mu^{pq},\varphi_1\overline{\varphi_2}\psi\circ
\pi_P \rangle= \int_{\R^d\times
P}\varphi_1(x)\overline{\varphi_2(x)}\psi\circ
\pi_P(\xi)d\mu^{pq}(x,\xi),
\end{split}
\end{equation}where $(x,\xi)\in \R^d\times P$, and ${\cal A}_\psi$ is a multiplier operator with the (admissible) symbol $\psi\in C(\R^{d})$.

2. The mapping $(p,q)\mapsto \mu^{pq}$ as the mapping from $E\times
E$ to the space ${\cal M}(\R^d\times P)$ of complex Radon measures
is continuous with the topology generated by the semi-norms
$||\mu||_K=Var(\mu)(K)$, $K$-compact in $\R^d\times P$.

\end{theorem}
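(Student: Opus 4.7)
My plan is to construct the family $\{\mu^{pq}\}$ first for $(p,q)$ in a countable dense subset of $\R$ via a diagonal extraction of $H$-measures, and then to extend it to a full-measure set $E$ by proving joint continuity in $(p,q)$.

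For fixed $p,q\in\R$, set $v_n^p(x):=h_n(x,p)-h(x,p)$ and $v_n^q(x):=h_n(x,q)-h(x,q)$. These sequences are uniformly bounded by $2$ and, by \eqref{limit}, converge weakly-$\star$ to zero in $L^\infty(\R^d)$; hence also weakly to zero in $L^2_{loc}(\R^d)$. Applying Theorem \ref{tbasic1_novo} to the two-component sequence $(v_n^p,v_n^q)$ yields, along a subsequence, a $2\times 2$ positive-definite matrix of $H$-measures on $\R^d\times P$; the off-diagonal entry is the sought $\mu^{pq}$. The relaxation $\varphi_1\in L^2(\R^d)$ used in \eqref{basic2_new} follows from the same truncation/density argument as in Remark \ref{rem_tar}. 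Choosing a countable dense subset $D\subset\R$, a Cantor diagonal extraction over the countable family $D\times D$ produces a single subsequence $(h_{n'})$ along which \eqref{basic2_new} holds simultaneously for every $(p,q)\in D\times D$.

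The heart of the proof is continuity of $(p,q)\mapsto\mu^{pq}$. The pointwise identity
\[
|h_n(x,p)-h_n(x,p')|=2\,\mathbf{1}_{\{\min(p,p')\le u_n(x)<\max(p,p')\}}
\]
gives, for any compact $K\subset\R^d$, the bound $\|h_n(\cdot,p)-h_n(\cdot,p')\|_{L^1(K)}\le 2|F_n^K(p)-F_n^K(p')|$ with $F_n^K(p):=|\{x\in K:u_n(x)<p\}|$. Since $(u_n)$ is $L^\infty$-bounded, the $F_n^K$ are nondecreasing and uniformly bounded; by Helly's selection, extracting a further subsequence and diagonalizing over a countable exhaustion $(K_m)$ of $\R^d$, they converge to nondecreasing limits $F^{K_m}$. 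The set of discontinuities of each $F^{K_m}$ is at most countable, so the union over $m$ is countable; let $E$ be its complement in $\R$, a set of full Lebesgue measure. For $(p,q),(p',q')\in D\cap E$ one estimates
\[
\bigl|\langle\mu^{pq}-\mu^{p'q'},\varphi_1\overline{\varphi_2}\psi\circ\pi_P\rangle\bigr|
\]
by decomposing the sesquilinear limit into pieces containing $v_n^p-v_n^{p'}$ and $v_n^q-v_n^{q'}$, and applying the Cauchy--Schwarz inequality furnished by positive-definiteness of the matrix $H$-measure of the enlarged $4$-tuple $(v_n^p,v_n^q,v_n^{p'},v_n^{q'})$. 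This yields a bound of the form $C(\varphi_1,\varphi_2,\psi)\cdot\bigl(|F^{K_m}(p)-F^{K_m}(p')|+|F^{K_m}(q)-F^{K_m}(q')|\bigr)^{1/2}$, a genuine modulus of continuity on $(D\cap E)\times(D\cap E)$. Density then extends $\mu^{pq}$ continuously to all of $E\times E$ and simultaneously gives assertion 2 in the variation-on-compacts topology.

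The hard part is precisely this continuity step. One must introduce the enlarged $4$-component $H$-measure in order to unlock the Cauchy--Schwarz inequality in its quadratic-form guise, then control the resulting estimates uniformly in the admissible symbol $\psi$ and over a compact exhaustion of $\R^d$, so that the bound on the sesquilinear form of $\mu^{pq}-\mu^{p'q'}$ is strong enough to yield the continuity of $(p,q)\mapsto\mu^{pq}$ in the variation-on-compacts topology demanded by the statement.
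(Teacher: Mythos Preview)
Your proposal is correct and follows essentially the same route as the paper. In fact the paper does not give its own proof of this statement at all; it simply remarks that, taking Remark~\ref{rem_tar} into account, the result ``can be proved in the exactly same way as \cite[Theorem 3]{msb95}'' (with further pointers to \cite{pan_arma}, \cite{Sazh}, \cite{JMSpa}). What you have written is a faithful reconstruction of Panov's argument from those references: diagonal extraction over a countable dense set of parameters, the monotonicity identity for $|h_n(\cdot,p)-h_n(\cdot,p')|$ in terms of level-set measures, Helly's selection to produce the full-measure continuity set $E$, and the Cauchy--Schwarz estimate coming from positive-definiteness of the enlarged matrix $H$-measure to upgrade pointwise bounds to continuity in the variation-on-compacts topology. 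Two minor remarks: the pointwise formula you wrote is really an inequality $|h_n(x,p)-h_n(x,p')|\le 2\,\mathbf{1}_{[\min(p,p'),\max(p,p')]}(u_n(x))$ (equality can fail on the level sets depending on the convention for $\mathrm{sgn}(0)$), which is all you need; and the passage from $\varphi_1\in C_0$ to $\varphi_1\in L^2$ uses that $(h_n-h)$ is uniformly bounded in $L^\infty$, exactly the $p=\infty$ case of Remark~\ref{rem_tar}.
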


Now, we can prove Theorem \ref{th_frac}.

{\bf Proof of Theorem \ref{th_frac}:} The proof uses the kinetic
formulation \eqref{pro-1} of \eqref{novo_4}.

{ First, take the functions $h_n$ and $h$ defined by \eqref{sgn} and
\eqref{limit}, respectively.} Then, notice that according to
\eqref{pro-1}, a subsequence $(h_{n'}-h)$ of the sequence
$(h_{n}-h)$ given in Theorem \ref{tbasic1_novo'} satisfies
\begin{equation}
\label{pro-5}
\begin{split}
&\int_{\R}\int_{\R^d}\!\sum\limits_{k=1}^d\!(h_{n'}\!-\!h)(x,\lambda)\pa_\lambda
f_k(x,\lambda)\rho(\lambda)\varphi_1(x)\overline{{\cal
A}_{\frac{(i\xi_k)^{\alpha_k}}{|\xi_1|^{\alpha_1}\!+\!|\xi_2|^{\alpha_2}\!+\!\dots\!+\!|\xi_d|^{\alpha_d}}}\varphi_2
(x)}dx\\&=\int_{\R}\int_{\R^d}(L^{n'}_{\lambda,\varphi_1}[\varphi_2]-L_{\lambda,\varphi_1}[\varphi_2])\rho'(\lambda)dxd\lambda,
\end{split}
\end{equation} where $\rho\in C^1_c(\R)$, $\varphi_1\in
C^\infty_c(\R^d)$, and $\varphi_2\in L^\infty(\R^d)$ are arbitrary.
Then, for a fixed $p\in \R$, put
$$
\varphi_2(x)=\varphi_2^{n'}(x,p)=(h_{n'}-h)(x,p)\phi_2(x,p), \ \
\phi_2\in C^\infty_c(\R^d\times\R).
$$After letting $n'\to \infty$ in \eqref{pro-5}, from Theorem
\ref{tbasic1_novo'} and conditions on $L^n_{\lambda,\varphi_1}$ and
$L_{\lambda,\varphi_1}$ given in Theorem \ref{th_frac}, we conclude
that for almost every $p\in \R$:
\begin{align*}
\int_{\R}\int_{\R^d\times P}\sum\limits_{i=1}^d\xi^{\alpha_k}_i
f_i(x,\lambda)\varphi_1(x)\rho(\lambda)\phi_2(x,p)d\mu^{\lambda
p}(x,\xi)dp =0,
\end{align*} where $\mu$ is an $H$-measure corresponding to the sequence $(h_n-h)$, as given in Theorem \ref{tbasic1_novo'}. For a fixed $q$, put here
$\rho(\lambda)=\frac{1}{\eps}\tilde{\rho}(\frac{\lambda-q}{\eps})\bar{\rho}(\frac{q+\lambda}{2})$
and $\phi_2(x,p)=\frac{1}{\eps}\tilde{\rho}(\frac{p-q}{\eps})
\bar{\rho}(\frac{q+p}{2})\varphi_1(x)$, where $\tilde{\rho}$ is a
non-negative compactly supported real function with total mass one,
and $\bar{\rho}\in C^1_0(\R)$ is arbitrary. Integrating over $p,q\in
\R$, and letting $\eps\to 0$, we obtain:
\begin{align*}
\int_{\R}\int_{\R^d\times P}\sum\limits_{k=1}^d (i\xi_k)^{\alpha_k}
f_k(x,q)\varphi_1^2(x)\bar{\rho}(q)d\mu^{qq}(x,\xi)dq=0.
\end{align*} From the genuine nonlinearity condition, we conclude
$\mu^{\lambda\lambda}\equiv 0$ for almost every $\lambda\in E$ (see
e.g. \cite[Theorem 5]{msb95}). This actually means that $h_{n'}\to
h$ strongly in $L^2_{loc}(\R^d\times \R)$, and that
$h(x,\lambda)={\rm sgn}(u(x)-\lambda)$ for some $u\in
L^\infty(\R^d)$. From here, it is not difficult to conclude that
$u_{n'}\to u$ strongly in $L^1_{loc}(\R^d)$. This concludes the
proof. $\Box$






\begin{thebibliography}{99}

\bibitem{MA1} (MR2563677)
\newblock J.~Aleksi\'c, D.~Mitrov\'ic, S.~Pilipovi\'c,
\newblock \emph{Hyperbolic conservation laws with vanishing nonlinear
diffusion and linear dispersion in heterogeneous media},
\newblock Journal of Evolution Equations, {\bf 9} (2009), 809--828.


\bibitem{ali1} (MR2305729)
\newblock N. Alibaud,
\newblock \emph{Entropy formulation for fractal conservation laws},
\newblock Journal of Evolution Equations, {\bf 7} (2007), 145--175.


\bibitem{Ant} (MR2728569)
\newblock N.~Antonic, M.~Lazar,
\newblock \emph{Parabolic variant of H-measures in
homogenisation of a model problem based on Navier-Stokes equation},
\newblock Nonlinear Analysis---Real World Appl, {\bf 11} (2010), 4500--4512.


\bibitem{Ant2} (MR2412122)
\newblock N.~Antonic, M.~Lazar,
\newblock \emph{ $H$-measures and variants applied to parbolic equations},
\newblock J. Math. Anal. Appl., {\bf 343} (2008), 207--225.


\bibitem{Dpe} (MR0808729)
\newblock R.~DiPerna,
\newblock \emph{Compensated compactness and general systems of conservation
laws},
\newblock Trans. Amer. Math. Soc., {\bf 292} (1985), 383-419.


\bibitem{droniou} (MR2259335)
\newblock J. Droniou and C. Imbert,
\newblock \emph{Fractal first-order partial differential equations},
\newblock Arch. Ration. Mech. Anal., {\bf 182} (2006), 299--331.

\bibitem{Ger} (MR1135919)
\newblock Gerard, P.,
\newblock \emph{Microlocal Defect Measures},
\newblock Comm. Partial Differential Equations {\bf 16} (1991), 1761--1794.


\bibitem{Kru}
\newblock S.~N.~Kruzhkov,
\newblock \emph{First order quasilinear equations in several
independent variables},
\newblock Mat. Sbornik. {\bf 81} (1970), 228--255;
English transl. in Math. USSR Sb. {\bf 10} (1970), 217--243.


\bibitem{Lio3} (MR1201239)
\newblock P.~L.~Lions, B.~Perthame, E.~Tadmor,
\newblock \emph{A kinetic formulation of multidimensional scalar conservation laws and related
equations},
\newblock J. of American Math. Soc.,  {\bf 7} (1994), 169--191.

\bibitem{Lio1} (MR0778970 )
\newblock P.~L.~Lions,
\newblock \emph{The concentration compactness principle in the calculus of variations. The locally compact
case.}
\newblock Part I. Ann. Inst. H.Poincare Sect. A (N.S.), {\bf 1}
(1984), 109--145, 223--283.


\bibitem{Lio2} (MR0778974 )
\newblock P.~L.~Lions,
\newblock \emph{The concentration compactness principle in the calculus of variations. The locally compact
case},
\newblock Part II. Ann. Inst. H.Poincare Sect. A (N.S.), {\bf 1}
(1984), 109--145, 223--283.


\bibitem{NHM_mit} (MR2601993)
\newblock D.~Mitrovic,
\newblock \emph{Existence and stability of a multidimensional scalar
conservation law with discontinuous flux},
\newblock Netw. Het. Media, {\bf 5} (2010), 163--188.



\bibitem{msb95} (MR1264775)
\newblock E.~Yu.~Panov,
\newblock \emph{On sequences of measure-valued solutions of a first order
quasilinear equations},
\newblock Russian Acad. Sci. Sb. Math. Vol. {\bf 81}
(1995), 211--227.



\bibitem{JMSpa} (MR2544036)
\newblock E.Yu.~Panov,
\newblock \emph{Ultra-parabolic equations with rough coefficients.
Entropy solutions and strong pre-compactness property},
\newblock  Journal of Mathematical Sciences, {\bf 159} (2009) 180--228.

\bibitem{pan_jhde} (MR2374223)
\newblock E.~Yu.~Panov,
\newblock \emph{Existence of strong traces for quasi-solutions of multidimensional conservation laws},
\newblock Journal of Hyperbolic Differential Equations, {\bf 4}
(2007), 729--770.

\bibitem{pan_arma} (MR2592291)
\newblock E. Yu. Panov,
\newblock \emph{Existence and strong precompactness properties for
entropy solutions of a first-order quasilinear equation with
discontinuous flux},
\newblock Arch. Ration. Mech. Anal.
{\bf 195} (2010) 643–-673.

\bibitem{Sazh} (MR2227988)
\newblock Sazhenkov, S. A.,
\newblock \emph{The genuinely nonlinear Graetz-Nusselt ultraparabolic
equation},
\newblock (Russian. Russian summary) Sibirsk. Mat. Zh. 47 (2006),
no. 2, 431--454; translation in Siberian Math. J. {\bf 47} (2006),
355--375


\bibitem{Tar} (MR1069518)
\newblock L.~Tartar,
\newblock \emph{H-measures, a new approach for studying homogenisation,
oscillation and concentration effects in PDEs},
\newblock Proc. Roy. Soc.
Edinburgh. Sect. A {\bf 115} (1990) 193-230


\bibitem{tar_book} (MR2582099)
\newblock L.~Tartar,
\newblock \emph{The general theory of homogenization. A personalized
introduction},
\newblock Lecture Notes of the Unione Matematica Italiana, 7. Springer-Verlag,
Berlin; UMI, Bologna, 2009. xxii+470 pp.

\end{thebibliography}
\end{document}